\newtheorem{theorem}{Theorem}[section]
\newtheorem{lemma}[theorem]{Lemma}
\newtheorem{problem}[theorem]{Problem}
\title{Planar graphs without short even cycles are near-bipartite}
\author{Runrun Liu$^{1,2}$ \hskip 0.15in Gexin Yu$^{2,3}$}
\address{
$^1$\small Department of Mathematics, Zhejiang Normal University, Jinhua, 321004, China.\\
$^2$\small Department of Mathematics and Statistics, Central China Normal University, Wuhan, 430079, China.\\
$^3$\small Department of Mathematics, William \& Mary, Williamsburg, VA, 23185, USA.
}
\thanks{The research was partially supported by NSFC (11728102) and the NSA grant  H98230-16-1-0316.}
\email{gyu@wm.edu}
\begin{document}

\maketitle

\begin{abstract}
A graph is {\em near-bipartite} if its vertex set can be partitioned into an independent set and a set that induces a forest. It is clear that near-bipartite graphs are $3$-colorable. In this note, we show that planar graphs without cycles of lengths in $\{4, 6, 8\}$  are near-bipartite.
\end{abstract}

\section{Introduction}

A graph is {\em k-degenerate} if each subgraph has a vertex of degree at most $k$.  Every $k$-degenerate graph is $(k+1)$-colorable. Borodin \cite{B76} in 1976 suggested to partition the vertex set of a graph into two sets that induce graphs with better degeneracy properties. A graph $G$ is $(a,b)$-partitionable if its vertices can be partitioned into sets $A$ and $B$ such that the subgraph induced by $A$ is $a$-degenerate and the subgraph induced by $B$ is $b$-degenerate.   Borodin~\cite{B76} conjectured that every planar graph, which is 5-degenerate, is $(1,2)$- and $(0, 3)$-partitionable.   Thomassen~\cite{T95, T01} confirmed these conjectures.

Clearly a graph is bipartite if and only if it is $(0,0)$-partitionable.  A graph is called {\em{ near-bipartite}} if it is $(0,1)$-partitionable. 
Recognizing near-bipartite graphs can be seen as restricted variants of the 3-coloring problem, which is well known to be NP-complete \cite{L73}. Borodin and Glebov \cite{BG01} showed that every planar graph of girth at least $5$ is near-bipartite (see \cite{KT09} for an extension of this result). Dross, Montassier, and Pinlou \cite{DMP15} asked whether every triangle-free planar graph is near-bipartite. 
Borodin, Glebov, Raspaud, and Salavatipour \cite{BGRS05} proved that planar graphs without cycles of length from 4 to 7 are 3-colorable. Wang and Chen \cite{WC07} showed that planar graphs without $\{4,6,8\}$-cycles are 3-colorable. In this paper, we improved this result by showing that planar graphs without $\{4,6,8\}$-cycles are near-bipartite. Some tricks in the proof resemble those first appeared in \cite{BG01, BGRS05} and further developed in \cite{BGMR09, BGR10}.

\begin{theorem}\label{main1}
Every planar graph without \{4,6,8\}-cycles is near-bipartite.
\end{theorem}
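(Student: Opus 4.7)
The plan is to use a minimal counterexample combined with discharging. Let $G$ be a plane embedded planar graph containing no cycle of length in $\{4,6,8\}$ that is not near-bipartite, chosen to minimize $|V(G)|+|E(G)|$. The proof proceeds in two stages: first establish that $G$ avoids a catalogue of reducible configurations, then redistribute charges on the plane embedding to contradict Euler's formula.

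Elementary reductions give $\delta(G)\geq 2$, since a pendant vertex can always be placed in the forest part of a near-bipartite partition of $G-v$; similarly one checks that $G$ is $2$-connected. From the cycle-length restriction, two triangles share no edge (else a $4$-cycle), a $3$-face and a $5$-face share no edge (else a $6$-cycle), and a $3$-face and a $7$-face share no edge (else an $8$-cycle); consequently every $3$-face is adjacent only to faces of length at least $9$. The heart of the argument is a list of forbidden local structures involving $3$-vertices, $3$-faces, and short paths between them. For each such configuration the scheme is uniform: delete or identify a small vertex set to obtain a smaller graph $G'$ in the same class, apply minimality to obtain a near-bipartite partition $(I',F')$ of $G'$, and lift it back to $G$ by choosing where the returning vertices land---checking that the induced subgraph on the forest part $F$ remains acyclic. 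Depending on how neighbors fall, a local swap between $I$ and $F$ may be required.

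For the discharging step, assign initial charge $\mu(x)=d(x)-4$ to each vertex and face $x$; by Euler's formula the total charge is $-8$. The principal rule shifts $1/3$ across each edge shared between a $9^+$-face and a $3$-face, which exactly cancels the deficit $-1$ of a $3$-face since such a face meets three faces of length at least $9$. Auxiliary rules send charge from $9^+$-faces and large-degree vertices to boundary $3$-vertices and, if needed, to $2$-vertices. Using the reducible configurations from the first stage, one verifies that every vertex and face ends with non-negative charge, a contradiction.

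The main obstacle will be the reducibility analysis. The near-bipartite partition carries a global acyclicity constraint on $G[F]$, strictly stronger than $3$-colorability; many configurations that are reducible for the Wang--Chen $3$-coloring theorem must therefore be re-examined, because extending $(I',F')$ to $G$ can introduce a cycle in $G[F]$ even when no edge inside $I$ is violated. Overcoming this typically forces additional reducible configurations and careful vertex identifications, very much in the spirit of the arguments developed in \cite{BG01, BGRS05, BGMR09, BGR10}.
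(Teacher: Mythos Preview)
Your outline has the right general shape (minimal counterexample plus discharging), but it differs from the paper in one structural choice that is not cosmetic, and as written it leaves a real gap.

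The paper does \emph{not} work with a minimal counterexample to near-bipartiteness. It instead proves the stronger precoloring-extension statement (Theorem~\ref{main2}): for every such planar graph $G$ and every cycle $C_0$ of length at most $12$, any $IF$-coloring of $C_0$ \emph{superextends} to $G$ (no $F$-cycle, and no $F$-path through $int(C_0)$ joining two vertices of $C_0$). Theorem~\ref{main1} then follows immediately. The point of this strengthening is the induction it buys: a minimal counterexample $(G,C_0)$ to Theorem~\ref{main2} has no separating cycle of length at most $12$ (Lemma~\ref{separating}), because one first superextends on the outside and then on the inside, and the ``no $F$-path back to $C$'' clause is exactly what makes the two colorings glue without creating an $F$-cycle.

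That separating-cycle lemma is what your framework is missing, and it is load-bearing. Both key reductions in the paper---the truly internal $5$-face with five $3$-vertices (Lemma~\ref{5face}) and the tetrad (Lemma~\ref{tetrad})---proceed by deleting a few vertices and \emph{identifying} two remaining vertices (e.g.\ $u_1$ with $v_3$, or $y$ with $v_1'$). To know that the identified graph still has no $\{4,6,8\}$-cycle, one must rule out a short path between the identified vertices in $G$; the paper does this by observing that such a path, together with the path through the deleted vertices, would form a separating cycle of length at most $12$, contradicting Lemma~\ref{separating}. In your direct setup (``$G$ is a smallest non-near-bipartite graph'') there is no reason a separating $\le 12$-cycle cannot exist, so you have no tool to certify that your identifications stay in the class. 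Your last paragraph correctly flags that the acyclicity of $G[F]$ is a global constraint and cites \cite{BG01,BGRS05,BGMR09,BGR10}; but those papers handle exactly this issue by passing to a precoloring-extension statement, which you have explicitly not done. Without that, the reducibility step---which you yourself identify as the main obstacle and leave unspecified---does not go through as sketched.

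A smaller point: in your framework one actually gets $\delta(G)\ge 3$, not merely $\delta(G)\ge 2$ (a $2$-vertex can always be placed in $I$ or as a leaf of $F$). In the paper, $2$-vertices survive precisely because they may lie on the precolored boundary $C_0$; this is also why the paper's discharging gives $C_0$ the special initial charge $|C_0|+4$ and has rules (R3)--(R4) feeding $C_0$, rather than aiming for a global total of $-8$ as you propose.
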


An {\em{IF-coloring}} of a graph is a partition of its vertices into two parts such that one part colored $I$ induces an independent set and the other part colored $F$ induces a forest. Given a graph $G$ and a cycle $C$ in $G$, an $IF$-coloring $\phi_C$ of $G[V(C)]$ {\it superextends} to $G$ if there exists an $IF$-coloring $\phi_G$ of $G$ that extends $\phi$ with the property that there is no path joining two vertices of $C$ all of whose vertices are colored $F$ and do not belong to $C$.  We say that $C$ is {\it superextendable} to $G$ if every $IF$-coloring $\phi_C$ of $G[V(C)]$ superextends to $G$. When we wish to specify $G$, we will say $(G,C)$ is superextendable.

Instead of Theorem~\ref{main1}, we actually proved a stronger result:

\begin{theorem}\label{main2}
For every planar graph $G$ without \{4,6,8\}-cycles and any cycle $C$ in $G$ of length at most 12, $(G,C)$ is superextendable.
\end{theorem}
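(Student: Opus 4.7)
The plan is to argue by minimum counterexample: suppose $(G,C)$ minimizes $|V(G)|+|E(G)|$ among pairs where $G$ is a planar graph without cycles of lengths $4,6,8$, $C$ is a cycle of length at most $12$ in $G$, and some $IF$-coloring $\phi_C$ of $G[V(C)]$ fails to superextend to $G$. I would first fix an embedding with $C$ as the outer face (if $C$ does not already bound a face, split $G$ along $C$ into smaller graphs and combine their superextensions, using that the union of superextensions is itself a superextension). I would also use minimality to show that every vertex outside $C$ has degree at least $3$: a vertex of degree $\le 2$ off $C$ can be removed, superextended inductively, and replaced, choosing its color to avoid both creating a monochromatic $I$-edge and completing an $F$-path between two vertices of $C$.

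Next I would build a list of reducible configurations forbidden in the minimum counterexample. Typical candidates are: long chains of $3$-vertices, $3$-faces sharing vertices with other $3$-faces or with $5$-faces, specific clusters of triangles and pentagons around a single low-degree vertex, and particular configurations of $3$-vertices incident with $C$. For each, I would perform a local reduction (deletion, edge contraction, or identification), verify that the operation introduces no new $4$-, $6$-, or $8$-cycle, invoke induction to obtain a superextension of the smaller instance, and then push the coloring back to $G$. The crucial point is that the superextendability hypothesis, rather than mere near-bipartiteness, provides exactly the inductive strength to pick boundary colorings flexibly enough to avoid creating an $F$-path through the reduced region connecting two vertices of $C$.

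Finally I would close the argument with discharging on the plane embedding. Give each vertex $v$ charge $d(v)-4$ and each face $f$ charge $d(f)-4$, with a standard correction on the outer face, so by Euler's formula the total charge is a fixed negative number. Because $\{4,6,8\}$-cycles are excluded, the only negatively charged elements are $3$-vertices (charge $-1$) and $3$-faces (charge $-1$); all other faces are $5$-, $7$-, or $9^+$-faces and carry nonnegative charge. The rules would send charge from $4^+$-vertices and $5^+$-faces to nearby $3$-vertices and $3$-faces at rates tuned so that, using the reducible configurations established above, every element finishes with nonnegative charge, contradicting the negative total.

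The main obstacle, I expect, is assembling a list of reducible configurations that is rich enough for discharging to succeed while still being verifiable. Pentagons are allowed, so triangles adjacent to $5$-faces form genuine local patterns in these graphs and can channel charge into $3$-faces in subtle ways; each proposed reduction has to be checked carefully against the $\{4,6,8\}$-cycle-free hypothesis, and recolorings near $C$ must respect the superextension constraint. Calibrating the set of reductions, the embedding conventions near $C$, and the discharging rates so that all three cohere is where the bulk of the technical work will lie, and it is here that the sharpening from near-bipartiteness to superextendability becomes essential for the induction to carry through.
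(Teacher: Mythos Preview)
Your plan is essentially the paper's own approach: a minimum counterexample with $C$ as the outer face, no separating $12^-$-cycles, internal vertices of degree at least $3$, a short list of reducible configurations eliminated by identification arguments, and a $d-4$ discharging scheme with a correction on the outer face. The paper's reducible list turns out to be quite short---a truly internal $5$-face with five $3$-vertices and a \emph{tetrad} (four consecutive internal $3$-vertices on a face with $v_1v_2$ and $v_3v_4$ on triangles)---together with structural lemmas about $C_0$; the discharging then goes through with $3$-faces pulling $\tfrac{1}{3}$ from each vertex, $7^+$-faces pushing $\tfrac{2}{3}$ to bad $3$-vertices and $\tfrac{1}{3}$ to other needy vertices, and $C_0$ absorbing the slack.
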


By \cite{BG01}, every planar graph of girth $5$ has an $IF$-coloring. Since $G$ contains no $4$-cycles, $G$ must have a triangle. Thus Theorem~\ref{main1} follows from Theorem~\ref{main2} easily.

Graphs mentioned in this paper are all simple. Let $d(v)$ be the degree of $v$. A $k$-vertex ($k^+$-vertex, $k^-$-vertex) is a vertex of degree $k$ (at least $k$, at most $k$). The same notation will be applied to faces and cycles. Let $C$ be a cycle of a plane graph $G$. We use $int(C)$ and $ext(C)$ to denote the sets of vertices located inside and outside $C$, respectively. The cycle $C$ is called a {\em separating cycle} if $int(C)\ne\emptyset\ne ext(C)$. We still use $C$ to denote the set of vertices of $C$.  An $F$-cycle is a cycle whose vertices are all colored $F$,  and an $F$-path is path whose vertices are colored $F$.

\section{Reducible Configurations}\label{ReducibleConfigurations}
Let $(G, C_0)$ be a counterexample to Theorem~\ref{main2} with minimum $\sigma(G)=|V(G)|+|E(G)|$, where $C_0$ is a cycle of length at most 12 in $G$ that is precolored. If $C_0$ is a separating cycle, then $C_0$ is superextendable in both $G-ext(C_0)$ and $G-int(C_0)$. Thus, $C_0$ is superextendable in $G$, contrary to the choice of $C_0$. Thus, we may assume that $C_0$ is the boundary of the outer face of $G$ in the rest of this paper. 
Call a vertex $v$ {\em internal} if $v\notin C_0$, call a face $f$ {\em internal} if $f\ne C_0$ and {\em truly internal} if $V(f)\cap V(C_0)=\emptyset$.

\begin{lemma}\label{minimum}
Every internal vertex in $G$ has degree at least $3$.
\end{lemma}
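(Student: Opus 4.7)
My plan is to argue by contradiction via the minimality of $\sigma(G)$. First I would suppose some internal vertex $v$ has $d(v)\le 2$ and fix an $IF$-coloring $\phi$ of $G[V(C_0)]$ that does not superextend to $G$ (such a $\phi$ exists since $(G,C_0)$ is a counterexample). Then I would pass to $G':=G-v$. This subgraph still omits all cycles of lengths in $\{4,6,8\}$, still contains $C_0$ as a precolored cycle of length at most $12$ (because $v$ is internal, so $v\notin V(C_0)$), and satisfies $\sigma(G')<\sigma(G)$, so by the minimality of $(G,C_0)$ the precoloring $\phi$ superextends to an $IF$-coloring $\phi'$ of $G'$. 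The only remaining task is to extend $\phi'$ by choosing a color for $v$.

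The rule I would use is the natural one: color $v$ with $F$ if some neighbor of $v$ in $G$ is colored $I$ under $\phi'$, and color $v$ with $I$ otherwise. If $v$ receives $I$, then by construction every neighbor of $v$ is colored $F$, so the $I$-class remains independent. If $v$ receives $F$, then since $d(v)\le 2$ and at least one neighbor is colored $I$, $v$ has at most one $F$-neighbor; thus $v$ becomes a leaf or an isolated vertex of the $F$-subgraph, so the $F$-subgraph remains acyclic.

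The delicate, though ultimately routine, step I expect to be the crux is checking that the superextension property survives. Any new $F$-path in $G$ that joins two vertices of $C_0$ through vertices outside $C_0$ must pass through $v$, since $\phi'$ already superextends $\phi$ in $G'$. Because $v\notin V(C_0)$, it would occur as an internal vertex of such a path and hence would need $F$-degree at least $2$. But the extension rule assigns $v$ the color $F$ only when some neighbor of $v$ is $I$, in which case $v$ has $F$-degree at most $1$. Thus no such bad $F$-path can appear, $\phi$ superextends to $G$, and the choice of $(G,C_0)$ is contradicted, proving the lemma.
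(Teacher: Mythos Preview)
Your argument is correct and follows the same approach as the paper: delete the low-degree internal vertex, superextend on $G-v$ by minimality, and then color $v$ with $F$ if it has an $I$-neighbor and with $I$ otherwise. The paper's proof is terser and does not spell out why the superextension condition (no new $F$-path between two vertices of $C_0$) is preserved, whereas you verify this explicitly via the observation that $v$ would need two $F$-neighbors to lie internally on such a path; this is a welcome elaboration but not a different idea.
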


\begin{proof}
Suppose there is an internal $2^-$-vertex $v$ in G. By minimality of $(G,C_0)$, the $IF$-coloring of $C$ can superextend to $G-v$. If $v$ has a neighbor colored $I$, then color $v$ with $F$, otherwise, color $v$ with $I$. In all cases, $C_0$ superextends to $G$, a contradiction.
\end{proof}

\begin{lemma}\label{separating}
The graph $G$ has no separating cycle of length at most 12.
\end{lemma}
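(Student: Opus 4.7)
The plan is to derive a contradiction from a supposed separating cycle $C$ of length at most $12$ by invoking the minimality of $(G, C_0)$ on both sides of $C$. To begin, observe that $C \neq C_0$: since $C_0$ bounds the outer (unbounded) face, $\mathrm{ext}(C_0) = \emptyset$ and $C_0$ is not separating. Moreover, $C$ is a bounded Jordan curve in the plane while $C_0$ borders the unbounded face, so automatically $V(C_0) \subseteq \mathrm{ext}(C) \cup V(C)$.

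Set $G_1 := G - \mathrm{int}(C)$ and $G_2 := G - \mathrm{ext}(C)$. Since $C$ is separating, $\mathrm{int}(C), \mathrm{ext}(C) \ne \emptyset$, so $\sigma(G_i) < \sigma(G)$ for $i = 1, 2$, and each $G_i$ is planar without $\{4,6,8\}$-cycles. First apply minimality to $(G_1, C_0)$: the given $IF$-coloring of $C_0$ superextends to an $IF$-coloring $\phi_1$ of $G_1$. Let $\psi$ be the restriction of $\phi_1$ to $V(C)$; it is an $IF$-coloring of $G[V(C)]$. Since $|C| \le 12$, applying minimality to $(G_2, C)$ shows that $\psi$ superextends to an $IF$-coloring $\phi_2$ of $G_2$. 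Planarity forbids any edge between $\mathrm{int}(C)$ and $\mathrm{ext}(C)$, so every edge of $G$ lies entirely in $G_1$ or in $G_2$; hence $\phi_1$ and $\phi_2$, which agree on $V(G_1) \cap V(G_2) = V(C)$, combine into a single $IF$-coloring $\phi$ of $G$.

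The crux---and the main obstacle---is checking that $\phi$ actually superextends $C_0$ in $G$, not merely that it is an $IF$-coloring. Suppose toward contradiction that some path $P$ in $G$ joins two vertices of $C_0$ with all internal vertices $F$-colored and outside $V(C_0)$. If $V(P) \subseteq V(G_1)$, then $P$ already violates the superextension property of $\phi_1$. Otherwise $P$ meets $\mathrm{int}(C)$; by planarity it must cross $V(C)$ to enter and again to leave. Take the sub-path $P'$ beginning at the last $V(C)$-vertex of $P$ before $P$ first enters $\mathrm{int}(C)$ and ending at the next $V(C)$-vertex of $P$. Then the endpoints of $P'$ lie in $V(C)$, while its (nonempty) set of internal vertices lies in $\mathrm{int}(C) \setminus V(C)$ and is $F$-colored---exactly the configuration forbidden by the superextension property of $\phi_2$ on $(G_2, C)$. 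Either case yields a contradiction, so $\phi$ superextends $C_0$ in $G$, contradicting the choice of $(G, C_0)$ as a counterexample. The delicate point is that it is essential to use the superextension property (not just $IF$-colorability) of both $\phi_1$ and $\phi_2$, which is exactly what prevents bad $F$-paths from being assembled across $C$.
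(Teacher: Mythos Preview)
Your argument follows the same two-step strategy as the paper: superextend $C_0$ to $G-\mathrm{int}(C)$, then superextend the induced coloring of $C$ to $G-\mathrm{ext}(C)$, and glue. You are considerably more explicit than the paper about why the glued coloring still superextends $C_0$, which is good.

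There is one small gap. When you write ``every edge of $G$ lies entirely in $G_1$ or in $G_2$; hence $\phi_1$ and $\phi_2$ \ldots\ combine into a single $IF$-coloring $\phi$ of $G$,'' the edge-by-edge argument shows only that the $I$-class is independent. It does \emph{not} show that the $F$-class induces a forest: an $F$-cycle in $G$ could use edges from both $G_1$ and $G_2$, and neither $\phi_i$ alone sees it. You need to rule out $F$-cycles separately, and the fix is exactly the maneuver you already use for the forbidden $F$-path. If an $F$-cycle $D$ lies entirely in $G_1$ it contradicts $\phi_1$ being an $IF$-coloring; otherwise $D$ enters $\mathrm{int}(C)$, and picking the subarc of $D$ between two consecutive visits to $V(C)$ whose interior lies in $\mathrm{int}(C)$ gives an $F$-path in $G_2$ joining two vertices of $C$ through $\mathrm{int}(C)\setminus V(C)$, violating the superextension property of $\phi_2$. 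Once you insert this paragraph, the proof is complete and matches the paper's (terser) argument.
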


\begin{proof}
Suppose otherwise that $C$ is a separating cycle of length at most 12 in $G$. Then $C$ is inside of $C_0$. By the minimality of $(G,C_0)$, $(G-int(C),C_0)$ is superextendable, and after that, $G[V(C)]$ is colored. By the minimality of $(G,C_0)$ again, $(C \cup int(C),C)$ is superextendable. Thus, $(G,C_0)$ is superextendable, a contradiction.
\end{proof}

\begin{lemma}\label{C}
The following are true about $C_0$.
\begin{enumerate}
\item $C_0$ is chordless.
\item Every non-adjacent pair $v_0$ and $v_1$ on $C_0$ have no internal common neighbors.
\end{enumerate}
\end{lemma}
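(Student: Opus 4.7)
The plan is to handle both parts by supposing the forbidden configuration and then squeezing $G$ between Lemmas~\ref{minimum} and~\ref{separating}: a chord or common internal neighbor produces short cycles inside $C_0$, and the absence of separating cycles of length at most $12$ combined with the absence of $\{4,6,8\}$-cycles will pin down the structure of $G$ enough to force a contradiction.

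For part (1), I suppose $uv$ is a chord of $C_0$. The chord together with the two $uv$-arcs of $C_0$ yields cycles $C_1,C_2$ with $|C_1|+|C_2|=|C_0|+2\le 14$, so each is of length at most $12$ (and, since $G$ avoids $\{4,6,8\}$-cycles, in fact at most $11$). Because $u$ and $v$ are non-adjacent on $C_0$, each $ext(C_i)$ contains the interior vertices of the opposite arc and is nonempty, so by Lemma~\ref{separating} we get $int(C_i)=\emptyset$ for $i=1,2$. Hence $G$ has no internal vertex, $G=G[V(C_0)]$, and any $IF$-coloring of $G[V(C_0)]$ superextends to $G$ trivially, contradicting the choice of $(G,C_0)$.

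For part (2), I start the same way. If non-adjacent $v_0,v_1\in V(C_0)$ share an internal common neighbor $w$, then $v_0wv_1$ together with either $v_0v_1$-arc of $C_0$ gives two cycles $C_1,C_2$ with $|C_1|+|C_2|=|C_0|+4\le 16$, each of length at least $4$, avoiding $\{4,6,8\}$, and hence at most $12$. Lemma~\ref{separating} again forces $int(C_1)=int(C_2)=\emptyset$, so $w$ is the unique internal vertex of $G$ and every neighbor of $w$ lies on $C_0$. Lemma~\ref{minimum} gives $d(w)\ge 3$.

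The crux of part (2), and the only real obstacle, is to rule out $w$ having three or more neighbors on $C_0$. Taking any three of them $u_1,u_2,u_3$ in cyclic order, I let $a,b,c\ge 1$ be the lengths of the three $C_0$-arcs they cut off, so $a+b+c=|C_0|\le 12$. For each of the three pairs $(u_i,u_j)$, the two cycles through $w$ built from the two $u_iu_j$-arcs have lengths $x+2$ as $x$ runs over $\{a,b,c,a+b,b+c,c+a\}$, so avoiding $\{4,6,8\}$-cycles demands that each of these six quantities miss $\{2,4,6\}$. A short case check over positive integer triples with $a+b+c\le 12$ (using symmetry to assume $a\le b\le c$, and noting that $a=1$ forces $a+b\in\{2,4,6\}$ unless $b\ge 7$, which then overshoots the budget, etc.) shows that no such triple exists, contradicting $d(w)\ge 3$ and finishing the proof.
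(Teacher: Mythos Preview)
Your argument is correct and follows essentially the same route as the paper's proof. For part~(1) both you and the paper observe that a chord forces $V(G)=V(C_0)$ via Lemma~\ref{separating}, whence the precoloring is already a valid $IF$-coloring. For part~(2) both arguments use the two $\le 12$-cycles through $w$ to pin $N(w)\subseteq V(C_0)$, invoke $d(w)\ge 3$, and then derive a contradiction from the forbidden cycle lengths; the paper phrases the final step in terms of the three small cycles $C_1,C_2,C_3$ (with $|C_1|+|C_2|+|C_3|\le 18$) and implicitly uses that $|C_i|+|C_j|-2\notin\{4,6,8\}$ to force $|C_2|\ge 9$ or $|C_2|\ge 7$, which is exactly your constraint $a+b\notin\{2,4,6\}$ in different clothing.
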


\begin{proof}
(1) If $C_0$ has a chord, then by Lemma~\ref{separating}, $V(C_0)=V(G)$. But by definition,  the precoloring of $C_0$ is already a good IF-coloring of $G$.

(2) Let $w$ be an internal common neighbor of $v_0,v_1\in V(C_0)$ with $v_0v_1\not\in E(G)$. Let $P_1$ and $P_2$ be the two paths of $C_0$ with ends $v_0$ and $v_1$. Then both $P_1+v_0wv_1$ and $P_2+v_0wv_1$ are cycles of length at most 12. By Lemma~\ref{separating}, $N(w)\subset V(C_0)$. Note that $d(w)\ge3$ by Lemma~\ref{minimum}. So $w$ has at least one neighbor, by symmetry say $v_2$ is on the segment of $C_0$ from $v_1$ to $v_0$ in the clockwise order. Let $P_i'$ be the path of $C_0$ with ends $v_i,v_{i+1}$ for $i\in[3]$ (index module $3$) in the clockwise order. Let $C_i=P_i'+v_iwv_{i+1}$ for $i\in[3]$. Note that $|C_1|+|C_2|+|C_3|\le18$. Without loss of generality, let $|C_1|\le|C_2|\le|C_3|$. Since $G$ contains no $\{4,6,8\}$-cycles, $|C_1|\in\{3,5\}$;  and when $|C_1|=3$, $|C_2|\ge 9$; when $|C_1|=5$, $|C_2|\ge7$. In any case, $\min\{3+9+9, 5+7+7\}>18$, a contradiction.
\end{proof}

\begin{lemma}\label{5face}
Every truly internal $5$-face is incident to at most four $3$-vertices.
\end{lemma}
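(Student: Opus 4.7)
The plan is to derive a contradiction by assuming that $f = v_1v_2v_3v_4v_5$ is a truly internal $5$-face with all $v_i$ being $3$-vertices, and letting $u_i$ denote the third neighbor of $v_i$ outside $f$. The strategy is to delete $V(f)$, invoke the minimality of $(G, C_0)$ on the smaller graph $G' = G - V(f)$ to obtain a superextension $\phi'$ of $(G', C_0)$, and extend $\phi'$ to a superextension of $(G, C_0)$, contradicting the choice of $(G, C_0)$.

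First, I would use the exclusion of $4$-, $6$-, $8$-cycles to constrain the $u_i$'s. If $u_i = u_j$ with $v_i, v_j$ at face-distance $2$ on $f$, the short face-arc together with $v_iu_iv_j$ forms a $4$-cycle; so $u_i = u_j$ only when $v_i, v_j$ are face-adjacent (creating a triangle). Similarly, $u_iu_j \notin E(G)$ for any $i \ne j$: if $v_iv_j$ is a face edge the edge $u_iu_j$ closes the $4$-cycle $v_iv_ju_ju_i$, and for $v_i, v_j$ at face-distance $2$ the long face-arc together with $v_iu_iu_jv_j$ forms a $6$-cycle. Hence $\{u_1, \ldots, u_5\}$ is an independent set in $G$.

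Next, by minimality let $\phi'$ be a superextension of $(G', C_0)$ and put $T = \{i : \phi'(u_i) = I\}$. For $i \in T$, any extension to $G$ forces $v_i = F$; for $i \notin T$ both colors are available, subject to (i) independence of the $I$-class, (ii) the forest property of the $F$-class, and (iii) no new $F$-path between vertices of $C_0$. If $|T| = 4$, say $T = \{1,2,3,4\}$, the coloring $v_1 = \cdots = v_4 = F,\ v_5 = I$ works: the new $F$-edges $v_1v_2, v_2v_3, v_3v_4$ form a path isolated from the $F$-subgraph of $G'$ (since $u_1,\ldots,u_4$ are colored $I$), so (ii) and (iii) hold automatically. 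For $|T| \le 3$, the freedom of choosing $I$-positions among non-$T$ indices should allow one to break the face $F$-cycle while separating pairs $u_i, u_j$ that lie in the same $F$-component of $\phi'$, preventing $F$-cycles of the form $v_iv_{i+1}\cdots v_ju_jPu_iv_i$.

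The main obstacle is the case $|T| = 5$: every $v_i$ is forced to $F$ and the face itself becomes an $F$-cycle. The remedy I would propose is to modify $\phi'$ by flipping some $u_i$ from $I$ to $F$, reducing to the $|T|=4$ case. Because $\{u_1,\ldots,u_5\}$ is independent in $G$, flipping introduces no $I$-$I$ adjacency; the hardest part will be showing that for some choice of $i$ the flip also preserves the forest property and the no-$C_0$-path property in $G'$, by using the forbidden-cycle structure around $u_i$ together with the superextension property of $\phi'$ to rule out the possibility that every flip creates either a new $F$-cycle through $u_i$'s $F$-neighbors or a new $F$-path joining two $C_0$-vertices.
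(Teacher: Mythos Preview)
Your plan has a genuine gap, and it is not where you think it is. Plain deletion $G'=G-V(f)$ gives you no control over how the $u_i$'s sit inside the $F$-forest of $\phi'$, and this kills the cases you wave through as routine. Take $|T|=0$: every $u_i$ is colored $F$. Since the face is a $5$-cycle, at most two of the $v_i$'s can be colored $I$, so the $F$-colored $v_i$'s always contain at least one face edge $v_iv_{i+1}$; together with $v_iu_i$ and $v_{i+1}u_{i+1}$ this produces an $F$-path from $u_i$ to $u_{i+1}$. If $u_i$ and $u_{i+1}$ already lie in the same $F$-tree of $\phi'$, you have closed an $F$-cycle. Nothing you have set up rules out the possibility that all five $u_i$'s lie in a single $F$-tree of $\phi'$ (pairwise non-adjacency of the $u_i$'s is no obstruction), and in that scenario every candidate coloring of the $v_i$'s fails. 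The same obstruction recurs for $|T|=1,2,3$; your clause ``the freedom of choosing $I$-positions \ldots\ should allow one to \ldots'' is exactly the place where the argument does not go through. Your $|T|=5$ flipping idea is also unfinished---for instance some or all of the $u_i$'s may lie on $C_0$ and hence be frozen---but that is secondary.

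The paper's proof supplies precisely the missing control by \emph{identifying} rather than merely deleting: it forms $G'$ from $G-\{v_1,v_2,v_4,v_5\}$ by gluing $u_1$ to $v_3$ (after checking via the no-separating-$12^-$-cycle lemma that this creates no chord of $C_0$ and no forbidden short cycle). Because the superextension of $G'$ has no $F$-cycle through the identified vertex and no $F$-path through it joining two vertices of $C_0$, one obtains for free that in $G$ there is no $F$-path between $u_1$ and $v_3$, and that $u_1$ and $v_3$ cannot both have $F$-paths to $C_0$. These constraints on the $F$-components of the $u_i$'s are exactly what is needed to push the case analysis through; without the identification you simply do not have them.
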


\begin{proof}
Suppose otherwise that $f=[v_1v_2v_3v_4v_5]$ is a truly internal $5$-face with five $3$-vertices. Let $u_i$ be the neighbor of $v_i$ not on $f$ for $i\in[5]$.  Let $G'$ be the graph obtained by identifying $v_3$ and $u_1$ of $G-\{v_1,v_2,v_4,v_5\}$.  Note that if the identification creates an edge between vertices of $C_0$,  then $u_1, u_3\in V(C_0)$, but the path between them along $C_0$ with $u_1v_1v_2v_3u_3$ forms a separating cycle of length at most $12$,  a contradiction to Lemma~\ref{separating}. It follows that the precoloring of $C_0$ remains valid in $G'$. Furthermore, if there is a path $Q$ of length at most 8 between $v_3$ and $u_1$, then $G$  has a separating cycle of length at most $11$ which is obtained from the path $u_1v_1v_2v_3$  and $Q$, contrary to Lemma~\ref{separating}. So $G'$ contains no $\{4,6,8\}$-cycles. Since $|V(G')|<|V(G)|$, we can extend the coloring of $C_0$ to $G'$.   Now we can extend this coloring to an $IF$-coloring of $G$ in the following way.

First assume that the new vertex created by the identification is colored $I$. Then we color $u_1$ and $v_3$ with $I$ and $v_1,v_2,v_4$ with $F$. If $u_5$ is colored with $F$, then color $v_5$ with $I$. If $u_5$ is colored with $I$, then color $v_5$ with $F$, which is invalid only if $u_2$ and $u_4$ are both colored $F$, in which case, we recolor $v_3$ with $F$ and $v_2,v_4$ with $I$. Note that we  create neither $F$-cycle nor $F$-path between two vertices of $C_0$.

Now assume that the new vertex created by the identification is colored $F$. First we color $u_1$ and $v_3$ with $F$. Note that there exists no $F$-path between $u_1$ and $v_3$ in $G$. If $u_4$ is colored $I$, then color $v_1,v_4$ with $F$, and for each $i\in\{2,5\}$, color $v_i$ with a color different from $u_i$; this is not a valid coloring only if $u_2$ and $u_5$ are both colored $I$, in which case we recolor $v_1$ with $I$. So we may assume that $u_4$ is colored $F$.  Color $v_1, v_4$ with $I$ and $v_2, v_3, v_5$ with $F$, we obtain a valid coloring, unless we create an $F$-cycle or an $F$-path between two vertices of $C_0$. In the bad cases, $u_2$ and $u_3$ are colored $F$. We may further assume that $u_5$ is colored $F$, for otherwise we can color $v_2, v_4$ with $I$ and $v_1, v_3, v_5$ with $F$ to obtain a valid coloring. 

Consider the  case that $u_i$ is colored $F$ for $i\in [5]$.   Note that there is no $F$-path between $u_1$ and $u_3$, and there cannot exist $F$-path between $u_1$ and $C_0$ and between $u_3$ and $C_0$ at the same time, since $G'$ has a valid coloring.    By symmetry, we may assume that there is no $F$-path between $u_3$ and $C_0$.  There must be an $F$-path, say $P$, between $u_2$ and $u_3$, for otherwise we obtain a valid coloring by coloring $v_2, v_3, v_5$ with $F$ and $v_1, v_4$ with $I$.  We color $v_1, v_2, v_4$ with $F$ and $v_3, v_5$ with $I$.  Since the $F$-path $u_1v_1v_2u_2$ in $G$ can be replaced with the $F$-path $u_1(v_3)u_3Pu_2$ in $G'$, this cannot be $F$-cycle or $F$-path containing $u_1v_1v_2u_2$ in $G$. Therefore we obtain a valid coloring.
\end{proof}

A $3$-vertex $v\not\in C_0$ is {\em bad} if $v$ is on a $3$-face. A {\em tetrad} in a plane graph is a path $v_1v_2v_3v_4$ of four internal $3$-vertices contained in the boundary of a face, so that both $v_1v_2$ and $v_3v_4$ are edges of triangles.

\begin{lemma}\label{tetrad}
G contains no tetrad. Consequently, no face of $G$ is incident with five consecutive bad vertices. Furthermore, if a face of $G$ is incident with consecutive vertices $v_0,v_1,\ldots,v_5$ and the vertices $v_1,\ldots,v_4$ are bad, then the edges $v_0v_1,v_2v_3$ and $v_4v_5$ are incident with triangles.
\end{lemma}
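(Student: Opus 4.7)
The plan is to suppose for contradiction that a tetrad $v_1v_2v_3v_4$ exists on the boundary of a face $f$, with triangles $v_1v_2x$ and $v_3v_4y$. Since each $v_i$ is an internal $3$-vertex, $N(v_1)=\{v_2,x,u_1\}$, $N(v_2)=\{v_1,v_3,x\}$, $N(v_3)=\{v_2,v_4,y\}$, $N(v_4)=\{v_3,y,u_4\}$. I first record the basic distinctness consequences of the $\{4,6,8\}$-cycle ban: $x\ne y$ (else $v_1v_2v_3x$ is a $4$-cycle), $xy\notin E(G)$ (else $v_2v_3yx$ is a $4$-cycle), and analogous non-coincidences among $u_1,u_4,x,y$.

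\textbf{Reduction.} Mimicking the identification trick used in the proof of Lemma~\ref{5face}, I would form $G':=G-\{v_1,v_2,v_3,v_4\}$ and identify $x$ with $y$ into a new vertex $z$; planarity is preserved since $f$ merges with the two triangles, placing $x$ and $y$ on a common face of $G-\{v_1,\dots,v_4\}$. Three items need verifying before invoking minimality. First, $G'$ is simple: a common neighbor $w$ of $x$ and $y$ would give the $6$-cycle $xwyv_4v_3v_2$ in $G$, forbidden. Second, $G'$ has no $\{4,6,8\}$-cycle: every new cycle through $z$ corresponds to an $xy$-path $P$ of length $\ell$ in $G-\{v_1,\dots,v_4\}$, and $P$ combined with the internal paths $xv_2v_3y$, $xv_1v_2v_3y$, $xv_2v_3v_4y$, or $xv_1v_2v_3v_4y$ forms a cycle of $G$ of length $\ell+3,\ \ell+4$, or $\ell+5$; small $\ell$ values fall in the forbidden set $\{4,6,8\}$, and the remaining lengths up to $12$ yield separating cycles contradicting Lemma~\ref{separating}. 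Third, the $C_0$-precoloring survives: Lemma~\ref{C}(2) together with Lemma~\ref{separating} rule out both $x,y\in V(C_0)$ in a clashing way. Since $|V(G')|<|V(G)|$, the minimality of $\sigma(G)$ furnishes an IF-coloring $\phi'$ of $G'$ superextending $C_0$.

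\textbf{Extension.} Pulling back $\phi'$, every vertex of $G$ except $v_1,v_2,v_3,v_4$ is colored, with $x$ and $y$ sharing the common color $c:=\phi'(z)$. If $c=F$, I would set $v_2=v_3=I$ (permitted since their only other neighbors are the $F$-colored $x,y$) and $v_1=v_4=F$; any new $F$-cycle would pass through $z$ in $\phi'$, which is impossible. If $c=I$, each $v_i$ is adjacent to the $I$-vertex $x$ or $y$ and is forced to take color $F$, creating the $F$-path $u_1v_1v_2v_3v_4u_4$; this closes an $F$-cycle or joins two $C_0$-vertices only if an $F$-path $P$ from $u_1$ to $u_4$ exists in $\phi'$ (necessarily avoiding $z=I$), and $P$ together with $u_1v_1v_2v_3v_4u_4$ is a cycle of length $|P|+5$ in $G$. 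Short $|P|$ yields either a forbidden $\{4,6,8\}$-cycle ($|P|\in\{1,3\}$) or a separating cycle of length at most $12$ ($|P|\le 7$), while longer $|P|$ is resolved by a local recoloring within $\{v_1,v_2,v_3,v_4\}$ along the lines of the concluding paragraphs of the proof of Lemma~\ref{5face}, using the fact that $\phi'$ has no $F$-path joining two $C_0$-vertices. In every sub-case one obtains a superextension of $C_0$ to $G$, contradicting the choice of $(G,C_0)$.

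\textbf{Consequences and main obstacle.} For the ``furthermore'' claim, consider four consecutive bad $3$-vertices $v_1v_2v_3v_4$ on a face with flanking $v_0,v_5$, and for $j=0,\dots,4$ let $t_j=1$ iff the face edge $v_jv_{j+1}$ is a triangle edge. Each bad $v_i$ requires $t_{i-1}+t_i\ge 1$, while two consecutive $t$'s both equal to $1$ would make the middle $3$-vertex's third neighbor coincide on both sides and close a $4$-cycle $v_jv_{j+1}v_{j+2}w$; hence $t_{i-1}+t_i=1$ for $i=1,\dots,4$, giving $t_0=t_2=t_4$ and $t_1=t_3$. The no-tetrad conclusion just proved forbids $t_1=t_3=1$, forcing $t_0=t_2=t_4=1$ and $t_1=t_3=0$, which is exactly the assertion that $v_0v_1,v_2v_3,v_4v_5$ are triangle edges. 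A fifth consecutive bad $v_5$ would trigger the same conclusion on the shifted window $v_2v_3v_4v_5$, producing triangles on $v_1v_2,v_3v_4,v_5v_6$; together with the first window this gives two consecutive triangle edges $v_0v_1,v_1v_2$ at the $3$-vertex $v_1$, closing a $4$-cycle, completing the ``consequently'' statement. The main obstacle of the whole argument is the case $c=I$ of the extension: ruling out or locally fixing $F$-paths from $u_1$ to $u_4$ in $\phi'$ requires combining the separating-cycle bound from Lemma~\ref{separating} with careful recoloring within the tetrad to preserve the superextension property.
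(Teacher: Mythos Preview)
Your identification is the wrong one, and this breaks the argument in two places.

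\textbf{The reduction does not preserve the cycle hypothesis.} You identify the two triangle apices $x$ and $y$, which are at distance~$3$ via the path $xv_2v_3y$. Let $f'$ be the face on the opposite side of the edge $v_2v_3$ from $f$; since $d(v_2)=d(v_3)=3$, the boundary of $f'$ contains $x,v_2,v_3,y$ consecutively, so $\partial f'\setminus\{v_2,v_3\}$ is an $xy$-path of length $|f'|-3$ in $G-\{v_1,\dots,v_4\}$. If $|f'|=9$ (which nothing forbids), this path has length~$6$ and your $G'$ acquires a $6$-cycle. None of the cycles you build in $G$ from this path help: they have lengths $9,10,10,11$, none forbidden, and none is separating, because the ``inside'' consists only of $f'$ together with one or both triangle faces and contains no vertex. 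The same happens for $|f'|=11$, producing an $8$-cycle in $G'$. So the minimality hypothesis cannot be invoked.

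\textbf{The extension genuinely fails when $c=I$.} With $x$ and $y$ both coloured $I$, every $v_i$ is adjacent to one of them, so \emph{all four} of $v_1,v_2,v_3,v_4$ are forced to colour $F$; there is no ``local recoloring within $\{v_1,\dots,v_4\}$'' available. If $u_1$ and $u_4$ are both $F$ and are joined by an $F$-path $P$ in $\phi'$ (which is perfectly consistent with $\phi'$ being a valid superextension of $G'$, since $z=I$ lies on no $F$-path), then $u_1v_1v_2v_3v_4u_4$ closes an $F$-cycle in $G$. Your separating-cycle argument cannot save this: $P$ is a path in a colouring, not a short path in the graph, and may have arbitrary length, so Lemma~\ref{separating} gives nothing once $|P|\ge 8$. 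The same obstruction arises if $u_1$ and $u_4$ each carry an $F$-path to $C_0$.

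The paper avoids both problems by identifying one triangle apex (your $x$) with the \emph{opposite non-triangle neighbour} (your $u_4$). The connecting path $xv_2v_3v_4u_4$ then has the pendant vertex $v_1$ hanging off it, so any short external path between the identified vertices yields a cycle that genuinely separates $v_1$ (or $x$) from the rest, making Lemma~\ref{separating} applicable. Equally important, $v_3$ is adjacent to neither identified vertex, so when the merged vertex is coloured $I$ one still has the freedom to put $v_3$ in $I$, which is exactly what the paper exploits. Your derivation of the two corollaries from the no-tetrad statement is fine.
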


\begin{proof}
Let $v_1v_2v_3v_4$ be a tetrad in $G$. Let $N(v_1)=\{x,v_2,v_1'\}$ and $N(v_4)=\{y,v_3,v_4'\}$, where $x,y$ are not in the triangles. Let $G'$ be the graph obtained by identifying $y$ and $v_1'$ of $G-\{v_1,v_2,v_3,v_4\}$. Note that the identification does not create a chord in $C_0$ or identify two vertices of $C_0$. For otherwise, there exists a path $P$ of length $4$ or $5$, which is internally disjoint from $C_0$, between two vertices of $C_0$, and since $|C_0|\le12$, there exists a cycle in $P\cup C_0$ of length at most $11$ that separates $x$ and $v_4$, contrary to Lemma~\ref{separating}. There is no path $Q$ of length at most 8 between $y$ and $v_1'$, for otherwise $G$ would have a cycle separating $K$ of length at most 12 that is obtained from the path $yv_3v_2v_1v_1'$  and $Q$,  contrary to Lemma~\ref{separating}. Thus, no new $8^-$-cycles are created in $G'$. This implies that $G'$ contains no $\{4,6,8\}$-cycles. Since $|V(G')|<|V(G)|$,  the precoloring of $C_0$ can be extended to  an $IF$-coloring of $G'$. Now we extend it to an $IF$-coloring of $G$.

First we color $v_1'$ and $y$ with the color of the identified vertex.  If $v_1',y$ are colored $I$, then color $v_1,v_2,v_4$ with $F$ and color $v_3$ with a color different from the color of $v_4'$, and we obtain a valid coloring.  So we assume that $v_1',y$ are colored $F$. Then there exists no $F$-path between $v_1'$ and $y$ in $G$.  Assume first that $v_4'$ is colored $I$. If $x$ is colored $I$, then color $v_1,  v_3, v_4$ with $F$ and $v_2$ with $I$; if $x$ is colored $F$, then color $v_1$ with $I$ and $v_2, v_3, v_4$ with $F$. In either case, we obtain a valid coloring.  Let $v_4'$ be colored $F$.  If $x$ is colored $I$, then color $v_2, v_4$ with $I$ and $v_1, v_3$ with $F$, and we obtain a valid coloring. So we assume $x$ is also colored $F$.   Since there is no $F$-path between $v_1'$ and $y$,  either there is no $F$-path between $v_1'$ and $v_4'$ or there is no $F$-path between $v_4'$ and $y$. Also, there is at most one $F$-path between $\{v_1', y\}$ and $C_0$. If there is no $F$-path between $v_1'$ and $C_0$ nor between $v_1'$ and $v_4'$, then color $v_1, v_4$ with $I$ and $v_2,v_3$ with $F$; if there is no $F$-path between $y$ and $C_0$ nor between $y$ and $v_4'$, then color $v_1, v_3$ with $I$ and $v_2,v_4$ with $F$; in either of these cases, we obtain a valid coloring. We consider the other two cases.
\begin{itemize}
\item Case 1: no $F$-path between $v_1'$ and $C_0$ nor between $y$ and $v_4'$.  We may assume that there are $F$-paths between $v_1'$ and $v_4'$ (say $P$) and between $y$ and $C_0$.  Since the $F$-path $P$ in $G$ can be replaced with the $F$-path $y(v_1')Pv_4'$ in $G'$, there is no $F$-path between $v_4'$ and $C_0$. Then we can color $v_1, v_3$ with $I$ and $v_2, v_4$ with $F$ to obtain a valid coloring.
\item Case 2: no $F$-path between $y$ and $C_0$ nor between $v_1'$ and $v_4'$.   We may assume that there are $F$-paths between $v_1'$ and $C_0$ and between $v_4'$ and $y$ (say $P$).   Since the $F$-path $P$ in $G$ can be replaced with the $F$-path $v_1'(y)Pv_4'$ in $G'$, there is no $F$-path between $v_4'$ and $C_0$. Then we can color $v_1, v_4$ with $I$ and $v_2, v_3$ with $F$ to obtain a valid coloring.
\end{itemize}


Now suppose that $f$ is incident with five consecutive bad vertices $v_1,\ldots,v_5$. Since $v_3$ is on a $3$-face and $G$ contains no adjacent $3$-faces, either $v_2v_3$ or $v_3v_4$ is an edge on a $3$-face. In the former case, $v_4v_5$ is an edge on a 3-face; in the latter case,  $v_1v_2$ must be an edge of a $3$-face. This implies that either $v_2 v_3v_4v_5$ or $v_1v_2v_3v_4$ is a tetrad, a contradiction.

For the furthermore part,  if a face of $G$ is incident with consecutive vertices $v_0,v_1,\ldots,v_5$ and the vertices $v_1,\ldots,v_4$ are bad, then either each of $v_1v_2,v_3v_4$ is an edge of a 3-face or each of $v_0v_1,v_2v_3,v_4v_5$ is an edge incident with a triangle.  But  the former cannot happen, as the path $v_1v_2v_3v_4$ is a tetrad and each of $v_1,\ldots, v_4$ is a bad vertex.
\end{proof}

\section{Discharging Procedure}
\label{Discharging}

We are now ready to present a discharging procedure that will complete the proof of Theorem~\ref{main2}.  Let $x\in V(G)\cup (F(G)-C_0)$ have an initial charge of $\mu(x)=d(x)-4$, and $\mu(C_0)=|C_0|+4$. By Euler's Formula, $\sum_{x\in V\cup F}\mu(x)=0$.
Let $\mu^*(x)$ be the charge of $x\in V\cup F$ after the discharge procedure. To lead to a contradiction, we shall prove that $\mu^*(x)\ge 0$ for all $x\in V(G)\cup F(G)$ and $\mu^*(C_0)>0$.

Let $v$ be a $4$-vertex on a face $f$. The vertex $v\not\in C_0$ is {\em poor} to $f$ if  either $v$ is incident with a $3$-face that is not adjacent to $f$, or $v$ is incident with two $3$-faces both adjacent to $f$, or $v$ is incident with a $5$-face adjacent to $f$. A $2$-vertex $u\in C_0$ is {\em special} if $u$ is on an internal $5$-face.

\medskip

\noindent Here are the discharging rules:

\begin{enumerate}[(R1)]
\item Each $3$-face gets $\frac{1}{3}$ from each incident vertex.

\item Each internal $5$-face gets $\frac{1}{3}$ from each incident $4^+$-vertex and gives $\frac{1}{3}$ to each incident $2$-vertex or internal $3$-vertex. Each internal $7^+$-face gives $\frac{2}{3}$ to each incident $2$-vertex or bad $3$-vertex, $\frac{1}{3}$ to each incident internal non-bad $3$-vertex or poor $4$-vertex, and then each internal face gives the surplus charge to $C_0$.

\item Each $4^+$-vertex on $C_0$ gives $\frac{1}{3}$ to each incident internal $5$-face.

\item The outer face $C_0$ gives $\frac{5}{3}$ to each incident special $2$-vertex, $\frac{4}{3}$ to each incident non-special $2$-vertex or $3$-vertex on a $3$-face,  and $1$ to each other incident vertex.
\end{enumerate}

\begin{lemma}\label{vertex}
Every vertex $v$ in $G$ has nonnegative final charge.
\end{lemma}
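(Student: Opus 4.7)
The plan is to split on $d(v)$ and on whether $v\in C_0$, using one local observation as the main tool: \emph{at every vertex $u$ of $G$, two faces incident to $u$ and sharing an edge at $u$ cannot both have length in $\{3,5\}$}, because the symmetric difference of their boundaries is a closed walk of length $4$, $6$, or $8$ through $u$, forbidden by hypothesis. Consequently the $3$- and $5$-faces at $u$ form an independent set in the cyclic arrangement of faces around $u$ (so at most $\lfloor d(u)/2\rfloor$ of them), and any face adjacent at $u$ to an incident $3$- or $5$-face of $u$ must be a $7^+$-face.

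I would first dispatch the easy cases. For a $2$-vertex, Lemma~\ref{minimum} puts $v$ on $C_0$, and Lemma~\ref{C} forces the unique internal face of $v$ to have length $5$ or $\ge 7$; (R4) combined with (R2) balances the initial $-2$ exactly. For an internal $3$-vertex, the non-bad case is immediate from (R2), while in the bad case the local constraint forces both non-triangular faces at $v$ to be $7^+$-faces, yielding $\mu^*(v)=-1-\tfrac13+2\cdot\tfrac23=0$. A $3$-vertex on $C_0$ is handled directly by (R4), which gives $\tfrac43$ when $v$ lies on a triangle (absorbing the $\tfrac13$ lost to (R1)) and $1$ otherwise. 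For a $5^+$-vertex, the $\lfloor d/2\rfloor$ bound yields $\mu^*(v)\ge d(v)-4-d(v)/6 \ge 1/6 > 0$, with any contribution from $C_0$ only helping.

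The bulk of the work is the internal $4$-vertex, where the initial charge is $0$ and $C_0$ contributes nothing. I would enumerate by the pair $(k,m)$ counting the incident $3$- and $5$-faces; the local constraint gives $k+m \le 2$, leaving the subcases $(1,0), (0,1), (2,0), (0,2), (1,1)$. In each, the remaining faces at $v$ are by definition neither $3$- nor $5$-faces, hence are $7^+$-faces, and a direct verification shows that $v$ is poor to at least $k+m$ of them via the appropriate clause of the definition: clause (i) when a lone $3$-face faces a non-adjacent opposite $7^+$-face; clause (ii) when two $3$-faces are both adjacent at $v$ to the witness; clause (iii) when an incident $5$-face is adjacent to the witness. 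Each witness returns $\tfrac13$ by (R2), so $\mu^*(v)\ge 0$. The analogous $4$-vertex on $C_0$ is easier because (R4) injects an extra $1$ and the three internal faces form only a path of adjacencies at $v$, which already bounds the deficit.

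The main obstacle is this internal $4$-vertex case: the arithmetic is tight, and the three clauses of the definition of \emph{poor} must be orchestrated so that every bad face at $v$ is matched to a distinct $7^+$-face witness, which requires a careful subcase analysis of the cyclic arrangement of faces around $v$.
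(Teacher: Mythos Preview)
Your proposal is correct and follows essentially the same approach as the paper: both split on $d(v)$ and on membership in $C_0$, both rely on the fact that no two $5^-$-faces can be consecutive around a vertex (your explicit ``local observation'' is exactly what the paper uses implicitly when it writes ``at most two incident $5^-$-faces'' and ``at most $\lfloor d(v)/2\rfloor$''), and both verify for the internal $4$-vertex that the clauses of \emph{poor} supply a matching $7^+$-face for every outgoing $\tfrac13$. Your $(k,m)$ enumeration of the internal $4$-vertex case is a somewhat cleaner packaging of the same subcase analysis the paper carries out in prose; there is no substantive difference in method.
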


\begin{proof}
We consider the degree of $v$.

Let $d(v)=2$.  Then by Lemma~\ref{minimum} $v\in C_0$.  By (R2) and (R4),  $v$ gets $\frac{5}{3}$ from $C_0$ and $\frac{1}{3}$ from the other incident face when $v$ is special, or gets $\frac{4}{3}$ from $C_0$ and $\frac{2}{3}$ from the other incident face otherwise. Thus, $\mu^*(v)\ge2-4+\min\{\frac{5}{3}+\frac{1}{3}, \frac{4}{3}+\frac{2}{3}\}=0$.

Let $d(v)=3$. If $v\in C_0$, then by (R1) and (R4),  $v$ gets $\frac{4}{3}$ from $C_0$ if $v$ is on a $3$-face,  or gets $1$ from $C_0$ otherwise. So $\mu^*(v)\ge3-4+\min\{1,\frac{4}{3}-\frac{1}{3}\}=0$. Let $v\not\in C_0$. By (R1) and (R2), $v$ gets $\frac{2}{3}$ from each of the two incident non-triangular faces and gives $\frac{1}{3}$ to its incident $3$-face if $v$ is on a $3$-face, or $v$ gets $\frac{1}{3}$ from each of the three incident faces if $v$ is not bad. So $\mu^*(v)\ge3-4+\min\{\frac{1}{3}\cdot3,\frac{2}{3}\cdot2-\frac{1}{3}\}=0$.

Let $d(v)=4$. Let $v\not\in C_0$.  By (R1) and (R2), $v$ gives $\frac{1}{3}$ to each of (at most two) incident $5^-$-faces and gets $\frac{1}{3}$ from each of the two incident $7^+$-faces when $v$ is on two $3$-faces; gets $\frac{1}{3}$ from the incident face when $v$ is on exactly one $3$-face and not on a $5$-face; gets $\frac{1}{3}$ from each of the two incident $7^+$-faces adjacent to the $5$-face when $v$ is on a $5$-face; so $\mu^*(v)\ge 4-4+\min\{\frac{1}{3}\cdot2-\frac{1}{3}\cdot2, \frac{1}{3}-\frac{1}{3}\}=0$. When $v\in C_0$,  $v$ gets $1$ from $C_0$ and gives $\frac{1}{3}$ to each of at most three incident faces by (R3), so $\mu^*(v)\ge4-4+1-\frac{1}{3}\cdot3=0$.

Finally, let $d(v)\ge5$.  If $v\in C_0$, then $v$ gets $1$ from $C_0$ and gives $\frac{1}{3}$ to each of at most $d(v)-1$ incident faces by (R1), (R3) and (R4); if $v\not\in C_0$, then $v$ gives $\frac{1}{3}$ to each of at most $\lfloor\frac{d(v)}{2}\rfloor$ incident $5^-$-faces by (R1) and (R2).  So $\mu^*(v)\ge d(v)-4+\min\{1-(d(v)-1)\cdot\frac{1}{3}, -\lfloor\frac{d(v)}{2}\rfloor\cdot\frac{1}{3}\}>0$.
\end{proof}

\begin{lemma}\label{face}
Every internal face in $G$ has nonnegative final charge.
\end{lemma}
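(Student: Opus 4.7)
I would verify $\mu^*(f)\ge 0$ face-by-face by case analysis on $d(f)$. Since $G$ has no $\{4,6,8\}$-cycles, the possible face lengths are $3$, $5$, or $\ge 7$; and since $G$ has no $4$-cycles, two distinct $3$-faces cannot share an edge.

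A $3$-face is handled by (R1) alone: $f$ collects $\tfrac{1}{3}$ from each of its three incident vertices, giving $\mu^*(f)=-1+1=0$. For an internal $5$-face $f$, rules (R2) and (R3) let $f$ receive $\tfrac{1}{3}$ from each incident $4^+$-vertex and pay $\tfrac{1}{3}$ to each incident $2$-vertex or internal $3$-vertex (a $3$-vertex of $C_0$ on $f$ gets nothing). Letting $k$ be the number of $4^+$-vertices and $m$ the number of $2$- or internal $3$-vertices on $f$, one has $\mu^*(f)=1+(k-m)/3$. In the truly internal case, Lemma~\ref{5face} gives $k\ge 1$ and $m=5-k$, so $\mu^*(f)\ge 0$. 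When $f$ meets $C_0$, a short case check using Lemma~\ref{C} (chordless $C_0$, no internal common neighbour of non-adjacent vertices of $C_0$) and Lemma~\ref{minimum} enumerates how vertices of $f$ can lie on $C_0$ and verifies $m-k\le 3$ in each configuration.

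For an internal $d(f)$-face with $d(f)\ge 7$, (R2) makes $f$ give $\tfrac{2}{3}$ to each incident $2$-vertex or bad $3$-vertex, $\tfrac{1}{3}$ to each incident internal non-bad $3$-vertex or poor $4$-vertex, and $0$ to every other incident vertex. The cornerstone is an arc argument based on Lemma~\ref{tetrad}: the non-bad vertices of $f$ split the face into arcs of consecutive bad vertices of length at most $4$, so if $f$ carries $b$ bad vertices, then $b\le 4(d(f)-b)$. Combined with the ``furthermore'' clause of Lemma~\ref{tetrad} and the absence of adjacent $3$-faces, every arc of length $3$ forces at least one, and every arc of length $4$ forces both, of its two non-bad face-neighbours to be $4^+$-valent (else that neighbour would sit on a forced triangle edge and be bad itself).

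The main obstacle lies in the tight cases $d(f)=7$ and $d(f)=9$: for $d(f)\ge 10$ the inequality $b\le\tfrac{4d(f)}{5}$ already yields $\tfrac{2}{3}b+\tfrac{1}{3}(d(f)-b)\le d(f)-4$, but in the tight regime the naive sum can still exceed $d(f)-4$ even after the $4^+$-valent promotions above. Here I would trace the $3$-faces forced at each bad arc to rule out the borderline configurations: a promoted $4^+$-valent arc-endpoint already sits on a triangle adjacent to $f$, and being poor would demand an additional $3$- or $5$-face adjacent to $f$ at that endpoint, which forces either a second $3$-face sharing an edge with the first (violating ``no adjacent $3$-faces'') or a short separating cycle ruled out by Lemma~\ref{separating}. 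A case analysis on the admissible arrangements of bad arcs on $7$- and $9$-faces then closes the proof.
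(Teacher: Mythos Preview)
Your treatment of $3$- and $5$-faces matches the paper. The gaps are all in the $7^+$-face analysis.

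\textbf{The $7$-face case is not tight.} You flag $d(f)=7$ as one of the two hard cases needing an arc-by-arc analysis. In fact a $3$-face sharing an edge with a $7$-face would create an $8$-cycle, which is forbidden; so a $7$-face carries \emph{no} bad $3$-vertices and your arc machinery is vacuous there. The paper's proof of this case is one line: the only $\tfrac{2}{3}$-recipients on a $7$-face are $2$-vertices (at most four of them), and the two path-endpoints on $C_0$ receive nothing, so $\mu^*(f)\ge 7-4-4\cdot\tfrac{2}{3}-\tfrac{1}{3}=0$.

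\textbf{The $2$-vertices.} Your bound $\tfrac{2}{3}b+\tfrac{1}{3}(d(f)-b)\le d(f)-4$ for $d(f)\ge 10$ silently assumes every non-bad vertex draws at most $\tfrac{1}{3}$. But $2$-vertices lie on $C_0$, are not bad, and still draw $\tfrac{2}{3}$, so the inequality is simply false when $f$ meets $C_0$ in $2$-vertices. The paper splits off this branch first: if $f$ has a $2$-vertex then $f$ shares a path with $C_0$ whose two endpoints are $3^+$-vertices of $C_0$ receiving nothing, and $\mu^*(f)\ge d(f)-4-\tfrac{2}{3}(d(f)-2)>0$ for $d(f)\ge 9$. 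Only after this does Lemma~\ref{tetrad} enter.

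\textbf{The tight $9$-face.} Your plan for seven bad vertices is to show the two promoted $4^+$-vertices $v_5,v_9$ are not poor because ``being poor would demand an additional $3$- or $5$-face adjacent to $f$, forcing adjacent $3$-faces or a short separating cycle''. This does not work as stated. A $4$-vertex $v_9$ with triangles on both $v_8v_9$ and $v_9v_1$ \emph{is} poor (case ``two $3$-faces both adjacent to $f$''), yet those triangles meet only at $v_9$ and violate nothing; and a $5$-face on $v_5v_6$ together with the $9$-face $f$ yields only a $12$-cycle, not a forbidden length. The paper's argument is different and local: analysing the length-$3$ bad arc $v_6v_7v_8$ shows that exactly one of the edges $v_5v_6,\,v_8v_9$ lies on a triangle, so by symmetry $v_5$ is incident with only one $5^-$-face (the triangle on $v_4v_5$), hence $v_5$ is not poor and receives nothing from $f$. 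That single saved $\tfrac{1}{3}$ closes the case.
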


\begin{proof}
Let $f$ be an internal face in $G$.  Recall that $G$ contains no $\{4,6,8\}$-cycles. For $d(f)=3$, $f$ gets $\frac{1}{3}$ from each incident vertex by (R1), so $\mu^*(f)\ge3-4+\frac{1}{3}\cdot3=0$. For $d(f)\ge5$, by (R2) we only need to show that $f$ has nonnegative charge before it sends charge to $C$, without loss of generality, we still use $\mu^*(f)$ to denote it.

Suppose $d(f)=5$. If $V(f)\cap C_0=\emptyset$, then $f$ is incident with at most four $3$-vertices by Lemma~\ref{5face}. By (R2), $f$ gains at least $\frac{1}{3}$ from its incident $4$-vertices. If $V(f)\cap C_0\ne\emptyset$, then $1\le|V(f)\cap C_0|\le3$ by Lemma~\ref{C}. If $|V(f)\cap C_0|=1$, then $f$ and $C_0$ share a $4^+$-vertex, thus by (R3)  $f$ gets $\frac{1}{3}$ from the incident vertex on $C_0$ and by (R2) gives $\frac{1}{3}$ to each of at most four incident internal $3$-vertices. If $|V(f)\cap C_0|=2$, then $f$ contains no $2$-vertices and gives $\frac{1}{3}$ to each of at most three incident internal $3$-vertices. If $|V(f)\cap C_0|=3$, then $f$ shares exactly one $2$-vertex with $C_0$ by Lemma~\ref{C}, thus by (R2), $f$ gives $\frac{1}{3}$ to the $2$-vertex and each of at most two incident internal $3$-vertices. In any case, $\mu^*(f)\ge5-4+\min\{-\frac{1}{3}\cdot4+\frac{1}{3}, -\frac{1}{3}\cdot3\}=0$.

Suppose $d(f)=7$. Then $f$ is not adjacent to any $3$-faces and contains at most four $2$-vertices. If $f$ contains a $2$-vertex, then $f$ shares at least two $3^+$-vertices with $C_0$, and f gives no charge to them. By (R2), $\mu^*(f)\ge7-4+\min\{-\frac{2}{3}\cdot4-\frac{1}{3}, -\frac{1}{3}\cdot7\}=0$.

Suppose $d(f)\ge9$. If $f$ contains $2$-vertices, then $f$ does not give charge to at least two $3^+$-vertices shared with $C_0$, so by (R2), $\mu^*(f)\ge d(f)-4-\frac{2}{3}(d(v)-2)>0$. So assume that $f$ contains no $2$-vertices.  By Lemma~\ref{tetrad}, $f$ contains at least two vertices that are not bad $3$-vertices.  It follows that $\mu^*(f)\ge d(f)-4-\frac{2}{3}\cdot(d(f)-2)-\frac{1}{3}\cdot2\ge -\frac{1}{3}$, with $\mu^*(f)<0$ only if $f$ is a $9$-face with seven bad $3$-vertices and two other internal non-bad $3$-vertices or poor $4$-vertices. But when $f$ contains seven bad $3$-vertices, at least four bad $3$-vertices are consecutive on $f$.   By Lemma~\ref{tetrad},  there are exactly four bad consecutive vertices on $f=v_1\ldots v_9$, say $v_1, v_2, v_3, v_4$, and $v_9$ and $v_5$ should be $4^+$-vertices.  As $v_6, v_7, v_8$ are bad $3$-vertices, $v_5$ or $v_9$, by symmetry say $v_5$, is not on another $5^-$-face. Then $f$ gives no charge to $v_5$ by (R2). Therefore, there is no $9$-face with negative final charge.
\end{proof}

{\bf Proof of Theorem~\ref{main2}}. By Lemma~\ref{vertex} and Lemma~\ref{face}, it is sufficient for us to check that $C_0$ has positive final charge. By Lemma~\ref{C}, if a $5$-face shares $2$-vertices with $C_0$, then it shares exactly one 2-vertex with $C_0$.  Let $P$ be a maximal path of $C_0$ such that each vertex in $V(P)$ is either a special $2$-vertex or a neighbor of a special $2$-vertex.  By (R4), $C_0$ gives $\frac{5}{3}$ to each incident special $2$-vertex and $1$ to each neighbor of a special $2$-vertex on $C$. So $C_0$ gives $\lceil\frac{|V(P)|}{2}\rceil\cdot1+\lfloor\frac{|V(P)|}{2}\rfloor\cdot\frac{5}{3}\le\frac{4}{3}|V(P)|$ to vertices on $P$. By (R4), $C_0$ gives at most $\frac{4}{3}$ to each vertex not in such a path.  It follows that $C_0$ gives at most $\frac{4}{3}|C_0|$ to its vertices.  So $\mu^*(C_0)\ge |C_0|+4-\frac{4}{3}|C_0|=\frac{1}{3}(12-|C_0|)\ge 0$, and $\mu^*(C_0)=0$ only when $|C_0|=12$ and either every other vertex on $C_0$ is a special $2$-vertex or each vertex on $C_0$ is either a $2$-vertex or $3$-vertex on a $3$-face. In the former case, let $u$ be a special $2$-vertex on $C_0$ and $f'$ be the internal $5$-face containing $u$. Let $u_1$ and $u_2$ be the two neighbors of $u$ on $f'$. Since $G$ contains no $8$-cycles, $u_1$ and $u_2$ are $4^+$-vertices. By (R2) and (R4), $f$ gets $\frac{1}{3}$ from each of $u_1$ and $u_2$ and gives at most $\frac{1}{3}$ to each other incident vertex. So $C_0$ can get at least $5-4-3\cdot\frac{1}{3}+\frac{1}{3}\cdot2=\frac{2}{3}$ from $f'$. So $\mu^*(C_0)>0$. In the latter case, since $G\ne C_0$, $C_0$ contains two bad $3$-vertices that belong to different $3$-faces but on the same face, say $f_0$, other than $C_0$.  It follows that $f_0$ is adjacent to at least two $3$-faces, so it must be a $9^+$-face since $G$ contains no $\{4,6,8\}$-cycles. By (R2), $f_0$ gives out at most $\frac{2}{3}(d(f_0)-2)$ to all incident vertices and gives at least $d(f_0)-4-\frac{2}{3}(d(f_0)-2)\ge\frac{1}{3}$ to $C_0$. So $\mu^*(C_0)>0$.

\section{Final remarks}

Kang, Jin, and Wang \cite{KJW16} proved that planar graphs without \{4,6,9\}-cycles are 3-colorable. We believe that we could get the following result, by slightly modifying the proof of Theorem~\ref{main2}. We will keep it as a future work.

\begin{problem}\label{main3}
Every planar graph without \{4,6,9\}-cycles is near-bipartite.
\end{problem}

{\bf A sketch of proof of Problem~\ref{main3}}.  We use the same discharging rules as the proof of Theorem~\ref{main2}.   The main difference is that we now have $8$-faces to consider but have no $9$-faces. In the reducible configurations part, we need to consider the property of the outer face $C_0$ carefully since the forbidden cycles have changed. In the discharging part, Now a $7$-face $f$ may share an edge with one $3$-face, but not more. If $f$ contains a $2$-vertex, then it contains at least two $3^+$-vertices on $C_0$, thus gives no charge to them.  As $f$ contains at most three $2$-vertices by Lemma~\ref{C}, $\mu^*(f)\ge 7-4-3\cdot \frac{2}{3}-2\cdot \frac{2}{3}=-\frac{1}{3}$. Then $\mu^*(f)<0$ only if $f$ contains three $2$-vertices and one internal triangle with two $3$-vertices on $f$, but we have a separating cycle of length at most $12$ in this case.   For $8$-faces, they do not share edges with triangle and have at most four $2$-vertices, so their final charges are at least $8-4-4\cdot \frac{2}{3}-4\cdot\frac{1}{3}=0$.  The calculation for other vertices and faces is the same as in the proof of Theorem~\ref{main2}. $\Box$

\medskip

We believe that planar graphs without cycles of lengths from $4$ to $7$ are near-bipartite, but failed to prove it.  The trouble is that an $8$-face may be adjacent to too many triangles when we try a similar approach to the proof of Theorem~\ref{main2}, which we cannot reduce. The condition in Theorem~\ref{main2} excludes $8$-cycles, and the conditions in Problem~\ref{main3} excludes the difficult situation.

\bigskip

{\bf Acknowledgement:} the authors would like to thank Daniel Cranston for bringing their attention to this research problem, and the referees for their careful reading and valuable suggestions.

\end{document}